\crefname{equation}{}{}
\Crefname{figure}{Figure}{Figures}
\crefname{page}{page}{pages}
\Crefname{enumi}{}{}
\Crefname{subsection}{Subsection}{Subsections}
\def\theoremname{Theorem}%
\def\propositionname{Proposition}%
\def\lemmaname{Lemma}%
\def\corollaryname{Corollary}%
\def\definitionname{Definition}%
\def\conventionname{Convention}%
\def\axiomname{Axiom}%
\def\remarkname{Remark}%
\def\examplename{Example}%
\def\constructionname{Construction}%
\def\notationname{Notation}%
\def\assumptionname{Assumption}%
\def\conjecturename{Conjecture}%
\newtheorem{thm}{\theoremname}[section]
\theoremstyle{plain}
\newtheorem{theorem}[thm]{\theoremname}
\newtheorem{proposition}[thm]{\propositionname}
\newtheorem{lemma}[thm]{\lemmaname}
\theoremstyle{definition}
\newtheorem{example}[thm]{\examplename}
\newtheorem{question}[thm]{Question}
\newcommand{\field}[1]{\mathbb{#1}}
\newcommand{\abs}[1]{\lvert#1\rvert}
\newcommand{\leb}{\mathcal{L}^1}
\newcommand{\ignoreSpellCheck}[1]{#1}
\DeclareRobustCommand{\crefnosort}[1]{%
  \begingroup\@cref@sortfalse\cref{#1}\endgroup
}
\pgfplotsset{compat=1.14}
\begin{document}
	
\title[Attractor of Cantor type with positive measure]{Attractor of Cantor type with positive measure}

\author[J.~Morawiec]{Janusz Morawiec}
\address{Instytut Matematyki\\
Uniwersytet \'{S}l\c aski\\
Bankowa 14\\
PL-40-007 Katowice\\
Poland}
\email{morawiec@math.us.edu.pl}
\author[T.~Z\"{u}rcher]{Thomas Z\"{u}rcher}
\address{Instytut Matematyki\\
Uniwersytet \'{S}l\c aski\\
Bankowa 14\\
PL-40-007 Katowice\\
Poland}
\email{thomas.zurcher@us.edu.pl}

\subjclass{Primary 28A80; Secondary 26A18}
\keywords{iterated function system, attractor, Cantor type set}

\begin{abstract}
We construct an iterated function system consisting of strictly increasing contractions $f,g\colon [0,1]\to [0,1]$ with $f([0,1])\cap g([0,1])=\emptyset$ and such that its attractor has positive Lebesgue measure.
\end{abstract}

\maketitle

\renewcommand{\theequation}{\arabic{section}.\arabic{equation}}\setcounter{equation}{0}


\section{Introduction}
Self-similar measures associated with iterated function systems (shortly IFS) have many significant and interesting applications in various areas of science, including mathematics, and in particular, the theory of functional equations (see e.g.~\cite{PS1998,M2009,CC2016}). Studying a functional equation connected with the problem posed in \cite{M1985,MZ}, we come to the following question.

\begin{question}\label{IFS: question}
Consider an iterated function system consisting of strictly monotone contractions $f_1,\ldots,f_N\colon [0,1]\to[0,1]$ such that
\begin{equation}\label{condition}
\bigcup_{n=1}^Nf_n([0,1])\neq[0,1]\quad\hbox{and}\quad
f_i((0,1))\cap f_j((0,1))=\emptyset\quad\hbox{for all }i\neq j.
\end{equation}
Is the attractor of this IFS necessary of Lebesgue measure zero? 	
\end{question}

Surprisingly we could not find any answer to this question by looking through the literature in this topic. The purpose of this paper is to give a negative answer to this question by constructing an example of an IFS consisting of two strictly increasing contractions $f,g\colon [0,1]\to [0,1]$ such that $f([0,1])\cap g([0,1])=\emptyset$ with the attractor of positive Lebesgue measure.


\section{Preliminaries}
We say that a function $f\colon [a,b]\to\mathbb R$ is {\it $L$-Lipschitz}, if
\begin{equation*}
\abs{f(x)-f(y)}\leq L\abs{x-y}\quad\hbox{for all } x,y\in [a,b].
\end{equation*}
Any $L$-Lipschitz function $f\colon [a,b]\to\mathbb R$  with $L<1$ is said to be a {\it contraction}.

The following fact will be used frequently in the announced construction; its proof is simple, so we omit it.

\begin{lemma}\label{Lipschitz: from local to global}
Let $f\colon [a,b]\to \field{R}$ be a function and let $c\in [a,b]$.
If the restrictions of~$f$ to $[a,c]$ and to~$[c,b]$ are both $L$-Lipschitz, then $f$ is $L$-Lipschitz as well.
\end{lemma}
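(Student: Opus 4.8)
The plan is to reduce to the two-point Lipschitz inequality by a case distinction on the position of the two points relative to the splitting point $c$. Fix arbitrary $x,y\in[a,b]$; by symmetry we may assume $x\le y$. If both $x$ and $y$ lie in $[a,c]$, or both lie in $[c,b]$, then the desired bound $\abs{f(x)-f(y)}\le L\abs{x-y}$ is immediate from the hypothesis on the corresponding restriction.

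The only remaining case is $x<c<y$ (the boundary situations $x=c$ or $y=c$ fall into the previous case). Here I would insert $c$ and use the triangle inequality together with the two local Lipschitz estimates:
\begin{equation*}
\abs{f(x)-f(y)}\le\abs{f(x)-f(c)}+\abs{f(c)-f(y)}\le L\abs{x-c}+L\abs{c-y}.
\end{equation*}
Since $x<c<y$, the right-hand side equals $L\bigl((c-x)+(y-c)\bigr)=L(y-x)=L\abs{x-y}$, which closes this case.

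There is essentially no obstacle: the argument is the standard ``gluing along an interior point'' trick, and the single subtle point is merely to note that the additivity $\abs{x-c}+\abs{c-y}=\abs{x-y}$ uses the ordering $x\le c\le y$, which is exactly why the case split is needed. Since $x,y$ were arbitrary, this establishes that $f$ is $L$-Lipschitz on all of $[a,b]$.
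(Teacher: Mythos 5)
Your proof is correct and is exactly the standard gluing argument the authors have in mind; the paper itself omits the proof as ``simple,'' so there is nothing to compare beyond noting that your case split and triangle-inequality step through $c$ fill that gap completely.
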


In this paper, whenever we are given a finite collection of contractions defined on the interval $[0,1]$ into itself, we refer to it as \emph{iterated function system}.

The following fact is well-known (see \cite[Theorem 9.1]{F2003}).

\begin{theorem}\label{UniqueAttractor}
If $\{f_1,\ldots,f_N\}$ is an IFS, then there is a unique {\it attractor}, i.e.\ a non-empty compact set $A_*\subset\mathbb R$ such that
\begin{equation*}
A_*=\bigcup_{n=1}^N f_n(A_*).
\end{equation*}
Moreover, if $A_1=[0,1]$ and $A_{k+1}=\bigcup_{n=1}^{N}f_n(A_{k})$ for every $k\in\mathbb N$, then
\begin{equation}\label{attractor}
A_*=\bigcap_{k\in\mathbb N}A_k.
\end{equation}
\end{theorem}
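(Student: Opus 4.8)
The plan is to realise $A_*$ as the fixed point of a contraction on a suitable complete metric space, following Hutchinson's classical argument. Let $\mathcal{K}$ denote the family of non-empty compact subsets of $[0,1]$, equipped with the Hausdorff metric $d_H$; recall that $(\mathcal{K},d_H)$ is a complete metric space. Define the Hutchinson operator $F\colon\mathcal{K}\to\mathcal{K}$ by $F(K)=\bigcup_{n=1}^N f_n(K)$. This indeed maps $\mathcal{K}$ into itself, since each $f_n$ is continuous (so $f_n(K)$ is compact, hence so is the finite union) and $f_n([0,1])\subseteq[0,1]$.

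The first step is to show that $F$ is a contraction on $(\mathcal{K},d_H)$. If $f_n$ is $L_n$-Lipschitz and $L=\max_{1\le n\le N}L_n<1$, then one checks that $d_H(f_n(K),f_n(K'))\le L\,d_H(K,K')$ for each $n$, and that $d_H\bigl(\bigcup_n f_n(K),\bigcup_n f_n(K')\bigr)\le\max_n d_H(f_n(K),f_n(K'))$; combining these yields $d_H(F(K),F(K'))\le L\,d_H(K,K')$. By the Banach fixed point theorem, $F$ has a unique fixed point $A_*\in\mathcal{K}$, which is precisely the unique non-empty compact set satisfying $A_*=\bigcup_{n=1}^N f_n(A_*)$; moreover $F^{k}(K)\to A_*$ in $d_H$ for every $K\in\mathcal{K}$.

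For the displayed formula \eqref{attractor}, take $K=A_1=[0,1]$ as the starting point, so that $A_{k+1}=F(A_k)=F^{k}([0,1])$. Since $F$ is monotone with respect to inclusion and $A_2=\bigcup_n f_n([0,1])\subseteq[0,1]=A_1$, an induction gives $A_{k+1}\subseteq A_k$ for all $k\in\mathbb{N}$. Thus $(A_k)_{k\in\mathbb{N}}$ is a decreasing sequence of non-empty compact sets, so $\bigcap_{k\in\mathbb{N}}A_k$ is non-empty and compact, and a nested decreasing sequence of compact sets converges to its intersection in the Hausdorff metric. On the other hand $A_k=F^{k-1}([0,1])\to A_*$ in $d_H$ by the previous paragraph, and limits in a metric space are unique, so $A_*=\bigcap_{k\in\mathbb{N}}A_k$.

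The only mildly delicate ingredients are the completeness of $(\mathcal{K},d_H)$ and the two Hausdorff-metric estimates used to verify that $F$ is a contraction; both are standard and can be found in \cite[Chapter 9]{F2003}, so I would either cite them directly or insert the short verifications.
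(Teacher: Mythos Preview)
Your argument is correct and is precisely the classical Hutchinson approach. Note, however, that the paper does not give its own proof of this theorem: it simply records the statement as well-known and cites \cite[Theorem~9.1]{F2003}. What you have written is essentially the proof one finds there, so there is nothing to compare---you have supplied the standard argument that the paper chose to omit by citation.
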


From the moreover part of \cref{UniqueAttractor} we see that the attractor of the IFS considered in \cref{IFS: question} looks like a set of Cantor type; in fact, \eqref{condition} and the strict monotonicity of $f_1,\ldots,f_N$ imply $A_{k+1}\varsubsetneq A_k$ for every $k\in\mathbb N$.
Let us mention here that not every set of Cantor type can be an attractor of some IFS (see \cite{CR2006}), and moreover, that typical closed sets in $[0,1]$ can not be attractors of any IFS (see \cite{DS2015}). It is also known that the family of all attractors is dense, path connected and an $F_\sigma$ set in the space of all nonempty and compact subsets of $[0,1]$ endowed with the Hausdorff metric (see \cite{Ds2016}).

Note that the strict monotonicity in \cref{IFS: question} is crucial. Indeed, if we omit the word {\lq\lq}strictly{\rq\rq}, then there is no problem to give an example of an IFS whose  attractor is of positive Lebesgue measure.

\begin{example}\label{nonincreasing example}
Define $F,G\colon [0,1]\to [0,1]$ by
\begin{align*}
F(x)=&
\begin{cases}
\frac{1}{2}x, & \mbox{if } x\in [0,\frac{1}{3}] \\
\frac{1}{6}, & \mbox{if } x\in (\frac{1}{3},\frac{2}{3}) \\
\frac{1}{2}x-\frac{1}{6}, & \mbox{if } x\in [\frac{2}{3},1]
\end{cases}
\quad\hbox{and}\quad
G(x)=&
\begin{cases}
\frac{2}{3}+\frac{1}{2}x, & \mbox{if } x\in [0,\frac{1}{3}] \\
\frac{5}{6}, & \mbox{if } x\in (\frac{1}{3},\frac{2}{3}) \\
\frac{1}{2}x+\frac{1}{2}, & \mbox{if }  x\in [\frac{2}{3},1].
\end{cases}
\end{align*}
A short calculation shows that $F([0,\frac{1}{3}]\cup[\frac{2}{3},1])\cup G([0,\frac{1}{3}]\cup[\frac{2}{3},1])=[0,\frac{1}{3}]\cup[\frac{2}{3},1]$. By \cref{UniqueAttractor} the set $[0,\frac{1}{3}]\cup [\frac{2}{3},1]$ is the attractor of the considered IFS, which clearly is not of Cantor type.
\end{example}


\section{The similitudes case}
Now we prove that if the considered IFS consists of similitudes, then the answer to the posed question is positive.

From now on, we denote by $\leb$ the Lebesgue measure on the real line.

\begin{proposition}\label{proposition}
Assume that $f_1,\ldots,f_N\colon[0,1]\to[0,1]$ is an IFS consisting of similitudes satisfying \eqref{condition}. Then the attractor of this IFS is of Lebesgue measure zero.
\end{proposition}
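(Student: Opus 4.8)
The plan is to bypass the Hausdorff-dimension machinery and instead run a direct measure estimate on the approximating sequence $A_k$ furnished by the \lq\lq moreover\rq\rq{} part of \cref{UniqueAttractor}. For each $n$ let $r_n\in(0,1)$ denote the contraction ratio of the similitude $f_n$, so that $f_n([0,1])$ is a closed interval of length $r_n$ and, crucially, $\leb(f_n(E))=r_n\leb(E)$ for every Lebesgue measurable $E\subseteq[0,1]$; this exact scaling identity is exactly what the similitude hypothesis buys us.

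The first step is to extract from \eqref{condition} the strict inequality $\sum_{n=1}^N r_n<1$. Since $f_n$ is a (strictly monotone) homeomorphism of $[0,1]$ onto $f_n([0,1])$, it carries $(0,1)$ onto the interior of the interval $f_n([0,1])$; hence the second half of \eqref{condition} says that the open intervals $f_n((0,1))$ are pairwise disjoint, which already gives $\sum_n r_n=\sum_n\leb(f_n((0,1)))\le 1$. For strictness, observe that $\bigcup_{n=1}^N f_n([0,1])$ is closed and, by the first half of \eqref{condition}, a proper subset of $[0,1]$; its complement in $[0,1]$ is therefore nonempty and relatively open, hence of positive Lebesgue measure, so $\sum_n r_n=\leb\bigl(\bigcup_{n=1}^N f_n([0,1])\bigr)<1$.

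Put $\rho:=\sum_{n=1}^N r_n<1$. Using the recursion $A_1=[0,1]$, $A_{k+1}=\bigcup_{n=1}^N f_n(A_k)$, an easy induction shows each $A_k$ is compact and $A_{k+1}\subseteq A_k$ (compactness from continuity of the $f_n$, nesting from $A_2\subseteq A_1$ and monotonicity of the Hutchinson operator). Moreover, by subadditivity and the scaling identity,
\begin{equation*}
\leb(A_{k+1})=\leb\Bigl(\bigcup_{n=1}^N f_n(A_k)\Bigr)\le\sum_{n=1}^N\leb(f_n(A_k))=\rho\,\leb(A_k),
\end{equation*}
so $\leb(A_k)\le\rho^{\,k-1}\leb(A_1)=\rho^{\,k-1}\to 0$. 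Since $A_*=\bigcap_{k\in\mathbb N}A_k$ by \eqref{attractor}, the $A_k$ decrease, and $\leb(A_1)=1<\infty$, continuity of $\leb$ from above yields $\leb(A_*)=\lim_{k\to\infty}\leb(A_k)=0$.

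I do not foresee a genuine obstacle here; the only point requiring care is the derivation of $\rho<1$ from the purely topological condition \eqref{condition}, via the remark that a nonempty relatively open subset of $[0,1]$ has positive measure. It is worth emphasising that the similitude hypothesis is used in an essential way: it makes $\leb(f_n(E))$ \emph{equal} to $r_n\leb(E)$ and forces the contraction ratio of $f_n$ to coincide with the length of $f_n([0,1])$. For general strictly monotone Lipschitz contractions neither statement holds, and that mismatch is precisely the loophole that the construction in the next section exploits.
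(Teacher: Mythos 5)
Your proof is correct and follows essentially the same route as the paper's: both arguments rest on the exact scaling $\leb(f_n(E))=r_n\,\leb(E)$ for similitudes together with $\sum_{n=1}^N r_n<1$, which forces $\leb(A_k)$ to decay geometrically (the paper packages this as the exact identity $\leb(A_k\setminus A_{k+1})=q(1-q)^{k-1}$ with $1-q=\sum_{n=1}^N\abs{a_n}$ and then sums up the measure of the complement). If anything, your version is slightly more careful, since you spell out why \eqref{condition} yields the strict inequality $\sum_{n=1}^N r_n<1$, a point the paper asserts without elaboration.
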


\begin{proof}
For every $n\in\{1,\ldots,N\}$, let the similitude $f_n$ be of the form
\begin{equation*}
f_n(x)=a_n x+b_n
\end{equation*}	
with some $a_n\in(-1,0)\cup(0,1)$ and $b_n\in[0,1]$.
	
Put $q=\leb(A_1\setminus A_2)$ and observe that $1-q=\sum_{n=1}^N|a_n|\in(0,1)$, by \eqref{condition}.
A~simple induction gives $\leb(A_k\setminus A_{k+1})=q(1-q)^{k-1}$ for every $\in\mathbb N$. Hence
\begin{equation*}
\leb(A_*)=1-\sum_{k=1}^{\infty}\leb(A_k\setminus A_{k+1})=1-\frac{q}{1-(1-q)}=0,
\end{equation*}
and the proof is complete.
\end{proof}	


\section{Construction of the example}

We begin with an explanation of the idea how we construct the announced example.
Consider the IFS consisting of the contractions $f_0,g_0\colon [0,1]\to [0,1]$ defined by
\begin{equation*}
f_0(x)=\frac{1}{3}x\quad\hbox{and}\quad g_0(x)=\frac{1}{3}x+\frac{2}{3}.
\end{equation*}
It is well-known that the attractor of this IFS is the Cantor set (see e.g.\ \cite[Chapter 9]{F2003}), which has Lebesgue measure zero. The problem is that the gap $(\frac{1}{3},\frac{2}{3})$ leads to the gaps $(\frac{1}{9},\frac{2}{9})$ and $(\frac{7}{9},\frac{8}{9})$. During the process, the gaps propagate and at the end sum up to a set of Lebesgue measure~$1$. To counteract, we modify the functions $f_0$ and $g_0$. As $(\frac{1}{3},\frac{2}{3})$ and its images generate gaps, we make the gaps smaller by mapping $(\frac{1}{3},\frac{2}{3})$ to smaller sets than $(\frac{1}{9},\frac{2}{9})$ and $(\frac{7}{9},\frac{8}{9})$. We continue to modify the functions $f_0$ and $g_0$ such that the images of the smaller gaps are even smaller. This way, we obtain two sequences of strictly increasing contractions that converge uniformly to strictly increasing contractions that form our IFS\@.


\subsection{Key sequences}
First, we need two sequences $(\varepsilon_k)_{k\in\mathbb N}$ and $(w_k)_{k\in\mathbb N}$ of parameters that will determine how we modify the functions $f_0$ and $g_0$.

We let
\begin{equation*}
w_1=1\quad\hbox{and}\quad \varepsilon_1=\frac{1}{6}.
\end{equation*}
Having defined $\varepsilon_l>0$ and $w_l\in\mathbb R$ for all $1\leq l\leq k$, we let
\begin{equation}\label{width recursive}
w_{k+1}=\frac{w_{k}}{2}-\varepsilon_{k}
\end{equation}
and choose $\varepsilon_{k+1}>0$ such that the following conditions are satisfied
\begin{align}
2^{k}\varepsilon_{k+1} &< \frac{1}{2}\cdot\frac{1}{4^{k}}\label{epsilon geometric},  \\
\frac{\varepsilon_{k+1}}{\varepsilon_{k}} &< \frac{1}{2}\label{epsilon quotient}, \\
\varepsilon_{k+1}&<\frac{w_k}{4}-\frac{\varepsilon_{k}}{2}.\label{width positive}
\end{align}
To see that the sequences $(\varepsilon_k)_{k\in\mathbb N}$ and $(w_k)_{k\in\mathbb N}$ are well-defined, we only have to show that we really can satisfy~\cref{width positive}.
First, we observe that $\frac{w_1}{4}-\frac{\varepsilon_1}{2}=\frac{1}{4}-\frac{1}{12}>0$. Thus we can choose $\varepsilon_2$. Fix $k\in\mathbb N$ and assume that we have already chosen $\varepsilon_{k+1}$ and $w_{k+1}$. Then, using \cref{width recursive} and \cref{width positive}, we have $\frac{w_{k+1}}{4}-\frac{\varepsilon_{k+1}}{2}=\frac{w_{k}}{8}-
\frac{\varepsilon_{k}}{4}-\frac{\varepsilon_{k+1}}{2}>0$, which shows that we can choose $\varepsilon_{k+2}$.

Condition~\cref{epsilon geometric} will be used to show that the attractor of the constructed IFS has positive Lebesgue measure. To guarantee that our functions are contractions, we will need condition~\cref{epsilon quotient}. Finally, conditions~\cref{width positive} and \cref{width recursive} will guarantee that all the intervals where the modifications will take place are non-degenerated but small enough.

\begin{lemma}\label{decay of the width}
For every $k\in\mathbb N$ we have
\begin{equation}\label{wk}
0<w_k\leq\frac{1}{2^{k-1}}
\end{equation}
and
\begin{equation}\label{wk+1}
\frac{w_{k+1}}{w_k}<\frac{1}{2}.
\end{equation}
\end{lemma}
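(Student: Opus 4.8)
The plan is to prove both assertions by induction on $k$, using the defining recursion \cref{width recursive} and the constraint \cref{width positive} that was built into the construction precisely for this purpose. For the base case $k=1$ we have $w_1=1=\frac{1}{2^0}$, so \eqref{wk} holds with equality; and $w_2=\frac{w_1}{2}-\varepsilon_1=\frac{1}{2}-\frac{1}{6}=\frac{1}{3}$, so $\frac{w_2}{w_1}=\frac{1}{3}<\frac{1}{2}$, giving \eqref{wk+1}.

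For the inductive step, assume $0<w_k\leq\frac{1}{2^{k-1}}$. First I would establish \eqref{wk+1} for this $k$: from \cref{width recursive} we have $w_{k+1}=\frac{w_k}{2}-\varepsilon_k$, and since $\varepsilon_k>0$ this already gives $w_{k+1}<\frac{w_k}{2}$, i.e.\ $\frac{w_{k+1}}{w_k}<\frac{1}{2}$. Next, combining this with the inductive bound $w_k\leq\frac{1}{2^{k-1}}$ yields $w_{k+1}<\frac{w_k}{2}\leq\frac{1}{2^k}$, which is the upper bound in \eqref{wk} for index $k+1$. The only remaining point is the strict positivity $w_{k+1}>0$: here I would invoke \cref{width positive} with index $k$, namely $\varepsilon_{k}<\frac{w_{k-1}}{4}-\frac{\varepsilon_{k-1}}{2}$ — but it is cleaner to note directly that the text's verification that \cref{width positive} can be satisfied shows $\frac{w_k}{4}-\frac{\varepsilon_k}{2}>0$ for every $k$, and since $w_{k+1}=\frac{w_k}{2}-\varepsilon_k=2\bigl(\frac{w_k}{4}-\frac{\varepsilon_k}{2}\bigr)>0$, we are done. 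Actually the sign of $w_{k+1}$ follows even more directly: \cref{width positive} with index $k$ reads $\varepsilon_{k+1}<\frac{w_k}{4}-\frac{\varepsilon_k}{2}$, and since $\varepsilon_{k+1}>0$ this forces $\frac{w_k}{4}-\frac{\varepsilon_k}{2}>0$, hence $w_{k+1}=\frac{w_k}{2}-\varepsilon_k>0$.

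I do not anticipate a genuine obstacle here; the lemma is essentially a bookkeeping consequence of the recursive definition, and the subtlety (ensuring positivity of the widths) has already been handled in the discussion immediately preceding the lemma statement. The only care needed is to order the two claims correctly — one should derive \eqref{wk+1} first (or at least the inequality $w_{k+1}<w_k/2$), since the upper bound in \eqref{wk} for $k+1$ is obtained from it together with the inductive hypothesis. An alternative, slightly more self-contained route would be to unfold \eqref{wk} directly: one checks $w_k\leq\frac{1}{2^{k-1}}$ by writing $w_k=\frac{1}{2^{k-1}}-\sum_{j=1}^{k-1}\frac{\varepsilon_j}{2^{k-1-j}}$ from iterating \cref{width recursive}, but since $\varepsilon_j>0$ this sum is positive and the bound is immediate; I would nevertheless prefer the clean two-line induction above.
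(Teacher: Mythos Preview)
Your proposal is correct and follows essentially the same route as the paper: both arguments handle the base case directly, derive $w_{k+1}<\frac{w_k}{2}$ immediately from \cref{width recursive} and $\varepsilon_k>0$ (giving \eqref{wk+1} and, by induction, the upper bound in \eqref{wk}), and obtain the positivity $w_{k+1}>0$ from \cref{width positive} via $w_{k+1}=\frac{w_k}{2}-\varepsilon_k>2\varepsilon_{k+1}>0$. The only cosmetic difference is that the paper states the positivity step as $w_{k+1}>2\varepsilon_{k+1}>0$ in one line, whereas you unpack the same inequality a bit more explicitly.
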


\begin{proof}
Conditions \cref{wk} and \cref{wk+1} are clearly true for $k=1$.	
	
If $k\geq 2$, then using \cref{width recursive} and \cref{width positive}, we get
$w_{k+1}=\frac{w_{k}}{2}-\varepsilon_{k}>2\varepsilon_{k+1}>0$. Thus the first inequality in \cref{wk} is proved. To prove the second one and \cref{wk+1}, it is enough to observe that applying \cref{width recursive} we have
$w_{k+1}=\frac{w_{k}}{2}-\varepsilon_{k}<\frac{w_{k}}{2}$ and proceed by induction on $k$.
\end{proof}


\subsection{Intervals where the modifications take place}
We inductively define a sequence of collections of intervals as follows. Put
\begin{equation*}
\mathcal{I}_1=\{[0,1]\}
\end{equation*}
and observe that the only interval in~$\mathcal{I}_1$ has length~$w_1$.

Fix $k\in\mathbb N$ and assume that the collection $\mathcal{I}_k$ has been defined in such a way that $b-a=w_k$ for each $[a,b]\in\mathcal{I}_{k}$; note that $w_k>0$ by \cref{wk}. Next observe that if $[a,b]\in\mathcal{I}_{k}$, then according to \cref{width recursive} we have
\begin{equation*}
\frac{a+b}{2}-\varepsilon_{k}-a=\frac{w_k}{2}-\varepsilon_{k}=w_{k+1}
\quad\hbox{and}\quad
b-\frac{a+b}{2}-\varepsilon_{k}=\frac{w_k}{2}-\varepsilon_{k}=w_{k+1}.
\end{equation*}
Now we put
\begin{equation*}
\mathcal{I}_{k+1}=\bigcup_{[a,b]\in\mathcal{I}_{k}}\left\{\left[a,\frac{a+b}{2}-\varepsilon_{k}\right],\left[\frac{a+b}{2}+\varepsilon_{k},b\right]\right\}.
\end{equation*}

In this way we have constructed a sequence $(\mathcal{I}_k)_{k\in\mathbb N}$  of collections of intervals.
Let us write down some of the sequence's properties in the next \lcnamecref{properties of Ik}.

\begin{lemma}\label{properties of Ik}
Assume that $k\in\mathbb N$.
\begin{enumerate}
\item The family $\mathcal{I}_k$ consists of $2^{k-1}$ pairwise disjoint closed subintervals of $[0,1]$.
\item If $[a,b]\in\mathcal{I}_k$, then $b-a=w_k$.
\item We have $\bigcup\mathcal{I}_{k+1}\varsubsetneq\bigcup\mathcal{I}_k$.
\item Let $[a,b]\in\mathcal{I}_{k}$. If $[a,b]\subset[0,\frac{1}{3}]$, then
$[a,b]+\frac{2}{3}\in\mathcal{I}_k$, and if $[a,b]\subset[\frac{2}{3},1]$, then $[a,b]-\frac{2}{3}\in\mathcal{I}_k$.
\end{enumerate}
\end{lemma}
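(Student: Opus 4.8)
The plan is to prove all four assertions by induction on $k$: I would treat (i) and (ii) together in one induction, read off (iii) directly from the form of the recursive step, and then prove (iv) by a second induction whose whole point is that the subdivision rule depends only on the level $k$ and not on the position of the interval being subdivided. I expect (iv) to be the only step requiring genuine care.

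For (i) and (ii): the base case $k=1$ is trivial, since $\mathcal{I}_1=\{[0,1]\}$ contains $2^{0}=1$ interval of length $w_1=1$. Assume the claim for $k$, and fix $[a,b]\in\mathcal{I}_k$. The displayed computation just before the lemma shows that $[a,\frac{a+b}{2}-\varepsilon_k]$ and $[\frac{a+b}{2}+\varepsilon_k,b]$ both have length $w_{k+1}$, which is positive by the first inequality in \cref{wk}; in particular $\varepsilon_k<\frac{w_k}{2}$ via \cref{width recursive}, so these are genuine subintervals of $[a,b]$, and they are disjoint because the nonempty open interval $(\frac{a+b}{2}-\varepsilon_k,\frac{a+b}{2}+\varepsilon_k)$ lies strictly between them. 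Intervals produced from different members of $\mathcal{I}_k$ are disjoint since the members themselves are. Hence $\mathcal{I}_{k+1}$ consists of $2\cdot 2^{k-1}=2^{k}$ pairwise disjoint closed subintervals of $[0,1]$, each of length $w_{k+1}$. Assertion (iii) now follows immediately: removing from $\bigcup\mathcal{I}_k$ exactly the open middle intervals $(\frac{a+b}{2}-\varepsilon_k,\frac{a+b}{2}+\varepsilon_k)$, $[a,b]\in\mathcal{I}_k$ (each nonempty since $\varepsilon_k>0$, and each a subset of $\bigcup\mathcal{I}_k$), yields $\bigcup\mathcal{I}_{k+1}$, so $\bigcup\mathcal{I}_{k+1}\varsubsetneq\bigcup\mathcal{I}_k$.

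For (iv) I would first record the auxiliary fact that for every $k\geq 2$ each member of $\mathcal{I}_k$ is contained in $[0,\frac13]$ or in $[\frac23,1]$: indeed $\mathcal{I}_2=\{[0,\frac13],[\frac23,1]\}$ (the subdivision of $[0,1]$ with $\varepsilon_1=\frac16$), so by (iii) $\bigcup\mathcal{I}_k\subset[0,\frac13]\cup[\frac23,1]$, and each member of $\mathcal{I}_k$, being an interval, lies in one of the two components. Then I induct on $k$: (iv) is vacuous for $k=1$ and clear for $k=2$. For the step from $k$ to $k+1$ (so $k\geq 2$), take $[a',b']\in\mathcal{I}_{k+1}$ with $[a',b']\subset[0,\frac13]$ and let $[a,b]\in\mathcal{I}_k$ be the member it was produced from; by the auxiliary fact $[a,b]\subset[0,\frac13]$ or $[a,b]\subset[\frac23,1]$, and the latter is impossible because $\emptyset\neq[a',b']\subset[a,b]\cap[0,\frac13]$, so $[a,b]\subset[0,\frac13]$ and the inductive hypothesis gives $[a,b]+\frac23\in\mathcal{I}_k$. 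Since subdividing $[a,b]+\frac23$ uses the same $\varepsilon_k$ and the midpoint $\frac{a+b}{2}+\frac23$, its two children are exactly the two children of $[a,b]$ shifted by $\frac23$; in particular $[a',b']+\frac23\in\mathcal{I}_{k+1}$. The case $[a',b']\subset[\frac23,1]$ is symmetric, with $+\frac23$ replaced by $-\frac23$. The one thing to watch throughout (iv) is precisely this interplay: the subdivision is equivariant under translation by $\frac23$ only because $\varepsilon_k$ was chosen independently of position, and the property ``contained in $[0,\frac13]$'' transfers between an interval and its parent only because, from level $2$ on, no interval of the construction meets the central gap $(\frac13,\frac23)$.
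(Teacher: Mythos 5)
Your proof is correct and follows essentially the same route as the paper, which simply declares (i)--(iii) clear from the construction and notes that (iv) follows by a simple induction starting from $\mathcal{I}_2=\{[0,\frac13],[\frac23,1]\}$. You have merely written out in full the details the paper leaves implicit, correctly identifying the two facts that make the induction for (iv) work: the subdivision rule is translation-equivariant because $\varepsilon_k$ depends only on the level, and from level $2$ onward every interval of the construction lies in $[0,\frac13]$ or $[\frac23,1]$.
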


\begin{proof}
Assertions (i), (ii) and (iii) are clear from the construction of the sequence $(\mathcal{I}_k)_{k\in\mathbb N}$. Assertion~(iv) can be proved by a simple induction with the fact that
\begin{equation*}
\mathcal{I}_2=\left\{\left[0,\frac{1}{3}\right],\left[\frac{2}{3},1\right]\right\}
\end{equation*} as its first step.
\end{proof}


\subsection{Attractor}
We now define a set, which turns out to be the attractor of our IFS\@.
For every $k\in\mathbb N$ we let
\begin{align*}\label{sets Ak}
A_k=\bigcup \mathcal{I}_k
\end{align*}
and observe that by assertion~(iii) of \cref{properties of Ik} we have \begin{equation}\label{asscending}
A_{k+1}\varsubsetneq A_k.
\end{equation}
Now we define $A_*$ as intersection of all $A_k$ as in \cref{attractor}.

It is clear that
\begin{equation*}
A_*\subset\left[0,\frac{1}{3}\right]\cup\left[\frac{2}{3},1\right].
\end{equation*}
Moreover, assertion~(iv) of \cref{properties of Ik} yields
\begin{equation*}
\left(A_*\cap\left[0,\frac{1}{3}\right]\right)+\frac{2}{3}=A_*\cap\left[\frac{2}{3},1\right].
\end{equation*}

\begin{lemma}\label{nowhere dense}
The set $A_*$ is of Cantor type, i.e. nonempty, compact, perfect and nowhere dense.
\end{lemma}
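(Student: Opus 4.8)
The plan is to verify the four defining properties of a set of Cantor type directly from the construction of the sets $A_k$, using only \cref{decay of the width} and \cref{properties of Ik}; the functions of the IFS play no role at this stage.

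Nonemptiness and compactness come essentially for free. Each $A_k=\bigcup\mathcal I_k$ is a finite union of closed intervals by assertion~(i) of \cref{properties of Ik}, hence compact, and is nonempty since $w_k>0$ by \eqref{wk}; the sequence $(A_k)_{k\in\mathbb N}$ is decreasing by \eqref{asscending}, so $A_*=\bigcap_{k\in\mathbb N}A_k$ is a nested intersection of nonempty compacta and therefore nonempty and compact. For nowhere density I would use that $A_*$ is closed and contains no nondegenerate interval: if $J\subset A_*$ were one, then since $\leb(J)>0$ we may pick $k$ with $\frac{1}{2^{k-1}}<\leb(J)$, and $J$, being connected and contained in $A_k=\bigcup\mathcal I_k$, must lie inside a single member of $\mathcal I_k$, whose length is $w_k\le\frac{1}{2^{k-1}}<\leb(J)$ by \eqref{wk} and assertion~(ii) of \cref{properties of Ik} --- a contradiction.

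The only step needing care is perfectness, i.e.\ the absence of isolated points (the set being already closed). Fix $x\in A_*$. For each $k$ there is, by disjointness, a unique $I_k\in\mathcal I_k$ with $x\in I_k$, and the construction gives $I_{k+1}\subset I_k$ with $\leb(I_k)=w_k\to 0$. By the definition of $\mathcal I_{k+1}$ the interval $I_k$ breaks into exactly two disjoint children lying in $\mathcal I_{k+1}$; let $I_k'$ be the one different from $I_{k+1}$. The key observation is that $I_k'$ still contains at least one interval of every later generation $\mathcal I_j$ with $j>k$, so $(A_j\cap I_k')_{j>k}$ is a decreasing sequence of nonempty compact sets, whence $A_*\cap I_k'=\bigcap_{j>k}(A_j\cap I_k')\neq\emptyset$; pick $y_k$ in it. Then $y_k\neq x$ because $I_k'$ is disjoint from $I_{k+1}\ni x$, while $\abs{y_k-x}\le\leb(I_k)=w_k\to 0$ since both points lie in $I_k$. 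Hence $x$ is a limit of points of $A_*\setminus\{x\}$, so $A_*$ is perfect, which completes the verification that $A_*$ is of Cantor type. (An alternative is to note that the binary splitting of the families $\mathcal I_k$ yields a homeomorphism of $A_*$ with $\{0,1\}^{\mathbb N}$ via itineraries, which gives all four properties at once; I expect the direct argument above to be shorter to write out.)
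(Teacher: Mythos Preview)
Your proof is correct and, for nonemptiness, compactness, and nowhere density, follows essentially the same route as the paper: nested nonempty compacta (the paper instead notes $0\in A_*$ directly) and the observation that any interval in $A_*$ would have to sit inside a member of $\mathcal I_k$ of length $w_k\le 2^{-(k-1)}$ for every $k$.

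The notable difference is that you actually prove perfectness, whereas the paper's own proof of this lemma omits that property entirely---it checks only nonemptiness, compactness, and nowhere density, despite the statement. Your argument (choosing, at each stage, a point of $A_*$ in the sibling interval $I_k'$ of the one containing $x$) is the standard and correct way to see this, so your proof is in fact more complete than the paper's.
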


\begin{proof}
It is easy to see that $0\in A_*$, so $A_*\neq\emptyset$.

From assertion~(i) of \cref{properties of Ik} we conclude that each $A_k$ is closed and bounded. Hence $A_*$ is compact.

For showing that $A_*$ is nowhere dense, suppose the contrary and choose a point
$x_0\in A_*$ and $r>0$ such that $(x_0-r,x_0+r)\subset A_*$. Thus $(x_0-r,x_0+r)\subset A_k$ for every $k\in\mathbb N$, which is impossible by assertions~(i) and (ii) of \cref{properties of Ik} and \cref{wk}.
\end{proof}

\begin{lemma}\label{positive measure}
The set $A_*$ has positive (one-dimensional) Lebesgue measure.
\end{lemma}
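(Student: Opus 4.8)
The plan is to use the continuity of Lebesgue measure from above. By assertion~(i) of \cref{properties of Ik}, each $A_k$ is a finite union of intervals, hence Lebesgue measurable, and by \eqref{asscending} the sequence $(A_k)_{k\in\mathbb N}$ is decreasing with $\leb(A_1)=\leb([0,1])=1<\infty$. Writing $A_1$ as the disjoint union of $A_*$ and of the sets $A_k\setminus A_{k+1}$, $k\in\mathbb N$, countable additivity gives
\[
\leb(A_*)=1-\sum_{k=1}^{\infty}\leb(A_k\setminus A_{k+1}),
\]
so it suffices to prove that the series on the right is strictly smaller than $1$. This is the same scheme as in the proof of \cref{proposition}, but now the summands decay fast enough.

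First I would compute $\leb(A_k\setminus A_{k+1})$. Fix $[a,b]\in\mathcal{I}_k$. The two intervals of $\mathcal{I}_{k+1}$ produced from $[a,b]$, namely $\bigl[a,\frac{a+b}{2}-\varepsilon_k\bigr]$ and $\bigl[\frac{a+b}{2}+\varepsilon_k,b\bigr]$, are contained in $[a,b]$ and disjoint from all other intervals of $\mathcal{I}_k$; hence the part of $A_k\setminus A_{k+1}$ lying in $[a,b]$ is exactly the open middle interval $\bigl(\frac{a+b}{2}-\varepsilon_k,\frac{a+b}{2}+\varepsilon_k\bigr)$, of length $2\varepsilon_k$. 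Since, by assertion~(i) of \cref{properties of Ik}, $\mathcal{I}_k$ consists of $2^{k-1}$ pairwise disjoint intervals, we obtain $\leb(A_k\setminus A_{k+1})=2^{k-1}\cdot 2\varepsilon_k=2^k\varepsilon_k$.

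Next I would estimate $\sum_{k=1}^{\infty}2^k\varepsilon_k$. The first term equals $2\varepsilon_1=\frac{1}{3}$. For every $k\geq 2$, applying \eqref{epsilon geometric} with $k-1$ in place of $k$ gives $2^{k-1}\varepsilon_k<\frac{1}{2}\cdot\frac{1}{4^{k-1}}$, hence $2^k\varepsilon_k<\frac{1}{4^{k-1}}$. Summing the geometric tail,
\[
\sum_{k=1}^{\infty}2^k\varepsilon_k<\frac{1}{3}+\sum_{k=2}^{\infty}\frac{1}{4^{k-1}}=\frac{1}{3}+\frac{1}{3}=\frac{2}{3}<1,
\]
and therefore $\leb(A_*)>1-\frac{2}{3}=\frac{1}{3}>0$.

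I do not expect a genuine obstacle here: condition \eqref{epsilon geometric} was introduced precisely so that this series converges fast, and $\varepsilon_1=\frac{1}{6}$ was chosen so that the first summand alone --- which \eqref{epsilon geometric} does not control --- equals $\frac{1}{3}$, leaving just enough room for the remaining geometric tail to keep the total below $1$. The only point requiring care is the index shift in \eqref{epsilon geometric} (it bounds $2^k\varepsilon_{k+1}$, i.e.\ the $(k+1)$-st summand), which forces the separate treatment of the term $k=1$.
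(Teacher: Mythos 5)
Your proof is correct and follows essentially the same route as the paper: both decompose $[0,1]\setminus A_*$ into the successive differences $A_k\setminus A_{k+1}$, compute $\leb(A_k\setminus A_{k+1})=2^{k-1}\cdot 2\varepsilon_k$, and bound the resulting series by $\frac{1}{3}+\frac{1}{3}=\frac{2}{3}$ using \cref{epsilon geometric}. Your handling of the index shift in \cref{epsilon geometric} and the separate treatment of the $k=1$ term match the paper's computation exactly.
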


\begin{proof}
By \cref{nowhere dense} the set $A_*$ is Lebesgue measurable. We calculate the measure of the complement of $A^*$. In the course of the computation, we need~\cref{asscending}, assertion~(i) of \cref{properties of Ik} and \cref{epsilon geometric}, as well as
\begin{equation*}
[0,1]\setminus A_k\setminus \left(\bigcup_{l=1}^{k-1}\big([0,1]\setminus A_l\big)\right)=A_{k-1}\setminus A_k.
\end{equation*}
Consequently,
\begin{equation*}
\begin{split}
\leb([0,1]\setminus A_*)&=\leb\left([0,1]\setminus \bigcap_{k=2}^\infty A_k\right)=\leb\left(\bigcup_{k=2}^{\infty}\big([0,1]\setminus A_k\big)\right)\\
&=\leb\left(\bigcup_{k=2}^{\infty}\left(\big( [0,1]\setminus A_k\big)\setminus \left(\bigcup_{l=1}^{k-1}\big([0,1]\setminus A_l\big)\right)\right)\right)\\
&\leq\sum_{k=2}^{\infty}\leb\left(\left([0,1]\setminus A_k\right)\setminus \left(\bigcup_{l=1}^{k-1}\big([0,1]\setminus A_l\big)\right)\right)\\
&=\leb\big([0,1]\setminus A_2\big)
+\sum_{k=3}^{\infty}\leb\left(\left([0,1]\setminus A_k\right)\setminus \left(\bigcup_{l=1}^{k-1}\big([0,1]\setminus A_l\big)\right)\right)\\
&=2\varepsilon_1+\sum_{k=3}^{\infty}2^{k-2}\cdot 2\varepsilon_{k-1}
<\frac{1}{3}+\sum_{k=3}^{\infty}\frac{1}{4^{k-2}}=\frac{2}{3}.
\end{split}
\end{equation*}
Finally, we have $\leb(A_*)\geq\frac{1}{3}>0$.
\end{proof}


\subsection{Sequence of functions}
Define the function $f_1\colon [0,1]\to [0,1]$ by
\begin{align*}
f_1(x)&=
\begin{cases}
3(\frac{1}{6}-\varepsilon_2)x, & \mbox{if } x\in [0,\frac{1}{3}] \\
6\varepsilon_2(x-\frac{1}{3})+\frac{1}{6}-\varepsilon_2, & \mbox{if } x\in (\frac{1}{3},\frac{2}{3}) \\
3(\frac{1}{6}-\varepsilon_2)(x-\frac{2}{3})+\frac{1}{6}+\varepsilon_2, & \mbox{if } x\in[\frac{2}{3},1].
\end{cases}
\end{align*}
Note that $f_1$ is a strictly increasing contraction with the minimal Lipschitz constant strictly smaller than $\frac{1}{2}$; here we use that $\varepsilon_2\in(0,\frac{1}{16})$ by~\cref{epsilon geometric} and apply \cref{Lipschitz: from local to global}.
Moreover, simple calculations (some of them with the use of \cref{width recursive}) give
\begin{equation*}
f_1(0)=f_0(0),\quad f_1(1)=f_0(1),\quad
f_1\left(\frac{1}{3}\right)-f_1(0)=f_1(1)-f_1\left(\frac{2}{3}\right)=w_3.
\end{equation*}

Fix $k\in\mathbb N$ and assume that the function $f_{k-1}\colon [0,1]\to [0,1]$ has been defined. Then we define $f_{k}\colon [0,1]\to [0,1]$ as follows.
If $[a,b]\in\mathcal{I}_{k}$, then we define $f_{k}$ on $[a,b]$ by
\begin{equation*}
f_{k}(x)=
\begin{cases}
\frac{w_{k+2}}{w_{k+1}}(x-a)+f_{k-1}(a), & \mbox{if } x\in [a,\frac{a+b}{2}-\varepsilon_{k}] \\
\frac{\varepsilon_{k+1}}{\varepsilon_{k}}(x-\frac{a+b}{2}+\varepsilon_{k})+f_{k-1}(a)+w_{k+2}, & \mbox{if }|x-\frac{a+b}{2}|<\varepsilon_{k} \\
\frac{w_{k+2}}{w_{k+1}}(x-\frac{a+b}{2}-\varepsilon_{k})+f_{k-1}(a)+w_{k+2}+2\varepsilon_{k+1}, &  \mbox{if }x\in [\frac{a+b}{2}+\varepsilon_{k},b];
\end{cases}
\end{equation*}
see \cref{Graphs}. Note that the above formula is consistent with the definition of~$f_1$ by \cref{width recursive} and simple calculations. In this way we have defined $f_k$ on $\bigcup\mathcal{I}_{k}$. Now we put
\begin{equation*}
f_{k}(x)=f_{k-1}(x)\quad\hbox{for every }x\in[0,1]\setminus\bigcup\mathcal{I}_{k}.
\end{equation*}

\tikzmath{\epsi=1/100;}
\begin{figure}
\begin{tikzpicture}[
declare function={
func(\x) =(\x<0.333333)*3*(1.0/6-\epsi)*\x + and(0.333333<\x,\x<0.6666666)*(6*\epsi*(\x-1/3)+1/6-\epsi) +(\x>0.6666666) *(3*(1.0/6-\epsi)*(x-2.0/3)+1.0/6+\epsi);}]
\begin{axis}[
xtick={0,1},
xticklabels={$a$\vphantom{$b$}, $b$},
extra x ticks={0.33333333,0.666666666},
extra x tick labels={$\!\!\!\!\!\!\!\frac{a+b}{2}\!-\!\varepsilon_k$,$\,\,\,\,\,\,\,\frac{a+b}{2}\!+\!\varepsilon_k$},
extra x tick style={
    xticklabel style={yshift=-1cm, anchor=south}
},
ytick={0, 0.15666666, 0.17666666, 0.3333333333},
yticklabels={$f_{k}(a)$,\raisebox{-.5cm}{$f_{k}(\frac{a+b}{2}\!-\!\varepsilon_k)$},\raisebox{.5cm}{$f_{k}(\frac{a+b}{2}\!+\!\varepsilon_k)$},$f_{k}(b)$},width=.41\textwidth,
height=.41\textwidth,
domain=0:1,]
\addplot[blue] {func(x)};
\end{axis}
\end{tikzpicture}
\quad
\begin{tikzpicture}[
declare function={
func(\x) =\x/3;}]
\begin{axis}[
xtick={0,1},
xticklabels={$a$\vphantom{$b$}, $b$},
extra x ticks={0.33333333,0.666666666},
extra x tick labels={$\!\!\!\!\!\!\!\frac{a+b}{2}\!-\!\varepsilon_k$,$\,\,\,\,\,\,\,\frac{a+b}{2}\!+\!\varepsilon_k$},
extra x tick style={
    xticklabel style={yshift=-1cm, anchor=south}
},
ytick={0, 0.1111111, 0.22222222222222, 0.3333333333},
yticklabels={$f_{k-1}(a)$,$f_{k-1}(\frac{a+b}{2}\!-\!\varepsilon_k)$,$f_{k-1}(\frac{a+b}{2}\!+\!\varepsilon_k)$,$f_{k-1}(b)$},
width=.41\textwidth,
height=.41\textwidth,
domain=0:1,]
\addplot[blue] {func(x)};
\end{axis}
\end{tikzpicture}
\caption{Graphs of $f_k$ and $f_{k-1}$ on $[a,b]\in\mathcal{I}_{k}$}\label{Graphs}
\end{figure}
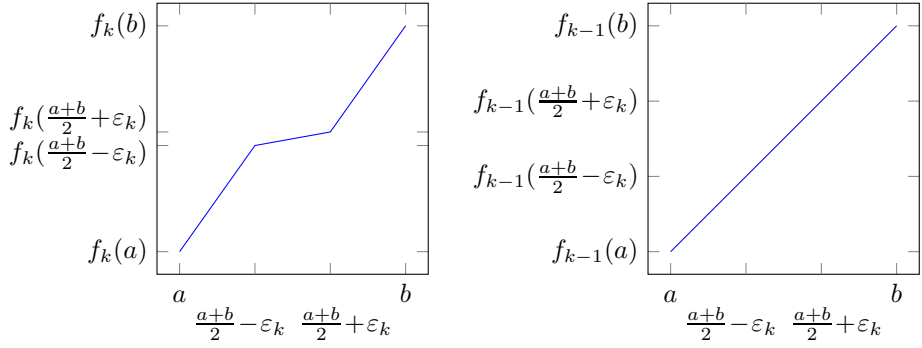
The next lemma collects essential properties of the just defined sequence $(f_k)_{k\in\mathbb N}$.

\begin{lemma}\label{properties of fk}
Assume that $k\in\mathbb N$.
\begin{enumerate}
\item If $[a,b]\in\mathcal{I}_{k+1}$, then $f_{k}(b)-f_{k}(a)=w_{k+2}$.
\item If $[a,b]\in\mathcal{I}_{k}$, then $f_{k}(a)=f_{k-1}(a)$ and $f_{k}(b)=f_{k-1}(b)$.
\item The function $f_{k-1}$ is affine on each $[a,b]\in\mathcal{I}_{k}$.
\item The function $f_k$ is strictly increasing.
\item The function $f_k$ is a contraction with Lipschitz constant strictly smaller than $\frac{1}{2}$.
\item We have $f_k([0,1])=[0,\frac{1}{3}]$.
\item If $[a,b]\in\mathcal{I}_{k}$, then  $[f_{k-1}(a),f_{k-1}(b)]\in\mathcal{I}_{k+1}$.
\item If $[a,b]\in\mathcal{I}_{k+1}$, then there exists an interval $[c,d]\in\mathcal{I}_{k}$ such that either $[a,b]=[f_{k-1}(c),f_{k-1}(d)]$ or $[a,b]=[f_{k-1}(c),f_{k-1}(d)]+\frac{2}{3}$.
\end{enumerate}
\end{lemma}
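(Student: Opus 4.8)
The plan is to prove all eight assertions simultaneously by induction on $k$. For $k=1$ everything is contained in, or immediate from, the remarks recorded right after the definition of $f_1$: that function is a strictly increasing contraction with Lipschitz constant below $\frac12$, with $f_1(0)=f_0(0)=0$, $f_1(1)=f_0(1)=\frac13$ and $f_1(\frac13)-f_1(0)=f_1(1)-f_1(\frac23)=w_3$, while $f_0(x)=\frac{x}{3}$ is affine on the only interval $[0,1]\in\mathcal I_1$ and $\mathcal I_2=\{[0,\frac13],[\frac23,1]\}=\{[f_0(0),f_0(1)],\,[f_0(0),f_0(1)]+\frac23\}$. So I would fix $k\ge2$, assume (i)--(viii) with $k-1$ in place of $k$, and use throughout only the following structural facts: every $[a,b]\in\mathcal I_k$ is one of the two pieces $[a',\frac{a'+b'}2-\varepsilon_{k-1}]$, $[\frac{a'+b'}2+\varepsilon_{k-1},b']$ into which some $[a',b']\in\mathcal I_{k-1}$ is split; on such an $[a',b']$ the function $f_{k-1}$ is given by the explicit three-piece formula; and $f_k$ agrees with $f_{k-1}$ outside $\bigcup\mathcal I_k$.

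I would first dispatch the ``local'' assertions. Assertion~(iii) is immediate, since $[a,b]\in\mathcal I_k$ is a piece of some $[a',b']\in\mathcal I_{k-1}$ and on each such piece $f_{k-1}$ is affine by its defining formula. For~(i), write $[a,b]\in\mathcal I_{k+1}$ as a piece of some $[a',b']\in\mathcal I_k$ and evaluate the three-piece formula for $f_k$ at $a$ and $b$; since an interval of $\mathcal I_k$ has length $w_k$ (assertion~(ii) of \cref{properties of Ik}), the relation \eqref{width recursive} collapses the difference to $f_k(b)-f_k(a)=\frac{w_{k+2}}{w_{k+1}}\cdot w_{k+1}=w_{k+2}$, whether $[a,b]$ is the left or the right piece. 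For~(ii), evaluating the third piece of the formula for $f_k$ at the right endpoint of $[a,b]\in\mathcal I_k$ gives $f_k(b)=f_{k-1}(a)+2w_{k+2}+2\varepsilon_{k+1}$, where $2w_{k+2}+2\varepsilon_{k+1}=w_{k+1}$ by \eqref{width recursive} and $w_{k+1}=f_{k-1}(b)-f_{k-1}(a)$ by assertion~(i) at level $k-1$, so $f_k(b)=f_{k-1}(b)$; and $f_k(a)=f_{k-1}(a)$ is read off the first piece.

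Next come (iv)--(vi), by gluing. On each $[a,b]\in\mathcal I_k$ the function $f_k$ is continuous (the two internal breakpoints match, by construction), strictly increasing (its slopes $\frac{w_{k+2}}{w_{k+1}}$ and $\frac{\varepsilon_{k+1}}{\varepsilon_k}$ are positive by \cref{decay of the width}), and $L$-Lipschitz with $L<\frac12$ (since $\frac{w_{k+2}}{w_{k+1}}<\frac12$ by \cref{decay of the width} and $\frac{\varepsilon_{k+1}}{\varepsilon_k}<\frac12$ by \eqref{epsilon quotient}, via \cref{Lipschitz: from local to global}); on the closure of each component of $[0,1]\setminus\bigcup\mathcal I_k$ it coincides with $f_{k-1}$ — inside by definition, at the two endpoints by~(ii) — hence there it inherits continuity, strict monotonicity and the Lipschitz bound from the inductive~(iv),~(v). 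Since these closed intervals form a finite partition of $[0,1]$, $f_k$ is continuous and strictly increasing on $[0,1]$, and, iterating \cref{Lipschitz: from local to global}, $L$-Lipschitz with $L<\frac12$; this is (iv) and (v). Then~(vi) follows: $0$ and $1$ belong to $\bigcup\mathcal I_k$ as the left endpoint of the leftmost and the right endpoint of the rightmost interval of $\mathcal I_k$, so by~(ii) and the inductive~(vi) (with monotonicity of $f_{k-1}$) $f_k(0)=f_{k-1}(0)=0$ and $f_k(1)=f_{k-1}(1)=\frac13$, whence $f_k([0,1])=[0,\frac13]$ by continuity and monotonicity.

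The substance is~(vii) and~(viii). Given $[a,b]\in\mathcal I_k$, a piece of $[a',b']\in\mathcal I_{k-1}$, the inductive~(vii) puts $[f_{k-2}(a'),f_{k-2}(b')]$ into $\mathcal I_k$; by assertion~(ii) of \cref{properties of Ik} this interval has length $w_k$, so (using \eqref{width recursive}) its two children in $\mathcal I_{k+1}$ are $[f_{k-2}(a'),\,f_{k-2}(a')+w_{k+1}]$ and $[f_{k-2}(a')+w_{k+1}+2\varepsilon_k,\,f_{k-2}(b')]$. A direct evaluation of the three-piece formula for $f_{k-1}$ on $[a',b']$ — again invoking \eqref{width recursive} and the inductive~(ii), e.g.\ to write $f_{k-1}(b')=f_{k-2}(b')$ — then identifies $[f_{k-1}(a),f_{k-1}(b)]$ with the first of these children when $[a,b]$ is the left piece and with the second when $[a,b]$ is the right piece; that is~(vii). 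For~(viii): by~(vii) and the strict monotonicity of $f_{k-1}$ (inductive~(iv)), the $2^{k-1}$ pairwise disjoint intervals of $\mathcal I_k$ are carried by $f_{k-1}$ to $2^{k-1}$ pairwise disjoint intervals of $\mathcal I_{k+1}$ lying inside $f_{k-1}([0,1])=[0,\frac13]$ (inductive~(vi)); translating each of them by $\frac23$ produces, via assertion~(iv) of \cref{properties of Ik}, $2^{k-1}$ further, disjoint, intervals of $\mathcal I_{k+1}$ inside $[\frac23,1]$; since $\mathcal I_{k+1}$ has exactly $2^k$ elements (assertion~(i) of \cref{properties of Ik}), these $2^k$ intervals exhaust $\mathcal I_{k+1}$, which gives~(viii). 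The fussiest point will be~(vii): one must keep several consecutive levels of \eqref{width recursive} aligned, together with ``length $w_k$'' for intervals of $\mathcal I_k$ and the inductive~(vii), to see not merely that $[f_{k-1}(a),f_{k-1}(b)]$ has the correct length $w_{k+1}$ but that its endpoints fall exactly on partition points of $\mathcal I_{k+1}$; everything else is bookkeeping with the explicit formula, the one remaining subtlety being to check continuity of $f_k$ (at the internal breakpoints and, through~(ii), at the junctions with the complementary gaps) before invoking the gluing arguments.
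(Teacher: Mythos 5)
Your proposal is correct and follows essentially the same route as the paper: direct evaluation of the piecewise formula together with \eqref{width recursive} for (i)--(iii), gluing via \cref{Lipschitz: from local to global} and induction for (iv)--(vi), the same endpoint-matching induction for (vii), and the same $2^{k-1}$-versus-$2^k$ counting argument combined with assertion (iv) of \cref{properties of Ik} for (viii). Packaging everything as one simultaneous induction on $k$ rather than item-by-item is only an organizational difference.
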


\begin{proof}
(i) Fix an interval $[c,d]\in\mathcal{I}_{k+1}$. Then there exists an interval $[a,b]\in\mathcal{I}_{k}$ such that either $[c,d]=[a,\frac{a+b}{2}-\varepsilon_{k}]$ or $[c,d]=[\frac{a+b}{2}+\varepsilon_{k},b]$.

If $[c,d]=[a,\frac{a+b}{2}-\varepsilon_{k}]$, then using assertion~(ii) of \cref{properties of Ik} and \cref{width recursive} we obtain
\begin{equation}\label{value at}
\begin{split}
f_{k}\left(\frac{a+b}{2}-\varepsilon_{k}\right)&=
\frac{w_{k+2}}{w_{k+1}}\left(\frac{b-a}{2}-\varepsilon_{k}\right)+f_{k-1}(a)\\
&=\frac{w_{k+2}}{w_{k+1}}\left(\frac{w_{k}}{2}-\varepsilon_{k}\right)+f_{k-1}(a)
=w_{k+2}+f_{k-1}(a).
\end{split}
\end{equation}
Hence $f_{k}(d)-f_{k}(c)=w_{k+2}$.

If $[c,d]=[\frac{a+b}{2}+\varepsilon_{k+1},b]$, then the same arguments as above give
\begin{equation*}
f_{k}(d)-f_{k}(c)=f_{k}(b)-f_{k}\left(\frac{a+b}{2}+\varepsilon_{k}\right)=\frac{w_{k+2}}{w_{k+1}}\left(\frac{b-a}{2}-\varepsilon_{k}\right)=w_{k+2}.
\end{equation*}

(ii) Fix $[a,b]\in\mathcal{I}_{k}$. We see at once that $f_{k}(a)=f_{k-1}(a)$. Applying assertion~(ii) of \cref{properties of Ik}, the just proven assertion~(i) and \cref{width recursive}, we obtain
\begin{equation*}
\begin{split}
f_{k}(b)&=\frac{w_{k+2}}{w_{k+1}}\left(\frac{b-a}{2}-\varepsilon_{k}\right)+f_{k-1}(a)+w_{k+2}+2\varepsilon_{k+1}\\
&=\frac{w_{k+2}}{w_{k+1}}\left(\frac{w_{k}}{2}-\varepsilon_{k}\right)+f_{k-1}(b)-w_{k+1}+w_{k+2}+2\varepsilon_{k+1}\\
&=2w_{k+2}+f_{k-1}(b)-w_{k+1}+2\varepsilon_{k+1}=f_{k-1}(b).
\end{split}
\end{equation*}

(iii) Clearly, $f_0$ is affine on $[0,1]$. Then a simple induction completes the proof.

(iv) It is enough to observe that on each interval $[a,b]\in\mathcal{I}_{k}$ the function $f_k$ is strictly increasing by \cref{wk}, and then proceed by induction on $k$ using assertion (ii).

(v) We first observe that $f_k$ is continuous on any interval $[a,b]\in\mathcal{I}_{k}$; continuity at $\frac{a+b}{2}+\varepsilon_{k}$ is clear and continuity at $\frac{a+b}{2}-\varepsilon_{k}$ follows from \cref{value at}.
Then combining \cref{wk+1} with \cref{epsilon quotient} and applying \cref{Lipschitz: from local to global} jointly with assertion (ii) we conclude that $f_k$ restricted to any interval $[a,b]\in\mathcal{I}_{k}$ is $L$-Lipschitz with $L<\frac{1}{2}$. Now the assertion can be proved by induction on $k$.

(vi) From the construction we can easy conclude, proceeding by induction with the use of assertion (ii), that for every $k\in\mathbb N$ we have $f_k(0)=0$ and $f_k(1)=\frac{1}{3}$.

(vii)
The assertion is clear for $k=1$.

Fix $k\in\mathbb N$ and assume inductively that (vii) holds. Fix also an interval $[c,d]\in\mathcal{I}_{k+1}$. Then there exists an interval $[a,b]\in\mathcal{I}_k$ such that either $[c,d]=[a,\frac{a+b}{2}-\varepsilon_{k}]$ or $[c,d]=[\frac{a+b}{2}+\varepsilon_{k},b]$.

First, we consider the case where $[c,d]=[a,\frac{a+b}{2}-\varepsilon_{k}]$.
Assertion~(ii) gives $f_{k}(c)=f_{k}(a)=f_{k-1}(a)$. Then assertion~(i) and the induction hypothesis imply $[f_{k}(c),f_{k}(c)+w_{k+1}]=[f_{k-1}(a),f_{k-1}(a)+w_{k+1}]=[f_{k-1}(a),f_{k-1}(b)]\in\mathcal{I}_{k+1}$. Hence, by the definition of $\mathcal{I}_{k+2}$, we see that
$[f_{k}(c),f_{k}(c)+\frac{w_{k+1}}{2}-\varepsilon_{k+1}]\in\mathcal{I}_{k+2}$.
Finally, according to assertion~(i) and \cref{width recursive}  we conclude that
\begin{equation*}
[f_{k}(c),f_{k}(d)]=[f_{k}(c),f_{k}(c)+w_{k+2}]=\left[f_{k}(c),f_{k}(c)+\frac{w_{k+1}}{2}-\varepsilon_{k+1}\right]\in\mathcal{I}_{k+2}.
\end{equation*}

Now, we consider the case where $[c,d]=[\frac{a+b}{2}+\varepsilon_{k},b]$.
Assertion~(ii) gives $f_{k}(d)=f_{k}(b)=f_{k-1}(b)$. Then assertion~(i) and the induction hypothesis imply $[f_{k}(d)-w_{k+1},f_{k}(d)]=[f_{k-1}(b)-w_{k+1},f_{k-1}(b)]=[f_{k-1}(a),f_{k-1}(b)]\in\mathcal{I}_{k+1}$. Hence, by the definition of $\mathcal{I}_{k+2}$, we see that $[f_{k}(d)-\frac{w_{k+1}}{2}+\varepsilon_{k+1},f_{k}(d)]\in\mathcal{I}_{k+2}$.
Finally, according to assertion~(i) and \cref{width recursive}  we conclude that
\begin{equation*}
[f_{k}(c),f_{k}(d)]=[f_{k}(d)-w_{k+2},f_{k}(d)]=\left[f_{k}(d)-\frac{w_{k+1}}{2}+\varepsilon_{k+1},f_{k}(d)\right]\in\mathcal{I}_{k+2}.
\end{equation*}

(viii) From assertion~(i) of \cref{properties of Ik} we see that families $\mathcal{I}_{k}$ and $\mathcal{I}_{k+1}$ consist of $2^{k-1}$ and $2^k$ pairwise disjoint closed intervals, respectively. The just proved assertion~(vii) and assertion~(iv) of \cref{properties of Ik} imply that with each interval $[a,b]\in\mathcal{I}_k$ there are associated exactly two intervals of the form
$[f_{k-1}(a),f_{k-1}(b)]$ and $[f_{k-1}(a),f_{k-1}(b)]+\frac{2}{3}$, and both belong to $\mathcal{I}_{k+1}$. It remains to note that if $[a,b]$ and $[c,d]$ are different intervals from $\mathcal{I}_k$, then the intervals associated with them
are pairwise disjoint, by assertions (iv) and (vi).

The proof is complete.
\end{proof}


\subsection{The limit function}
We show that the sequence $(f_k)_{k\in\mathbb N}$ converges pointwise to a strictly increasing contraction. We begin with showing that it is convergent.

\begin{lemma}\label{Cauchy}
The sequence $(f_k)_{k\in\mathbb N}$ is a Cauchy sequence with respect to the supremum norm.
\end{lemma}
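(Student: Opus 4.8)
The plan is to show that the consecutive differences $\norm{f_k-f_{k-1}}_\infty$ form a summable sequence; since convergence of $\sum_k\norm{f_k-f_{k-1}}_\infty$ forces $(f_k)_{k\in\mathbb N}$ to be Cauchy, this suffices. The starting observation is that, by the very definition of the sequence, $f_k(x)=f_{k-1}(x)$ for every $x\in[0,1]\setminus\bigcup\mathcal{I}_k$, so
\[
\norm{f_k-f_{k-1}}_\infty=\max_{[a,b]\in\mathcal{I}_k}\ \sup_{x\in[a,b]}\abs{f_k(x)-f_{k-1}(x)},
\]
the maximum being over a finite family by assertion~(i) of \cref{properties of Ik}.

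Next I would fix an interval $[a,b]\in\mathcal{I}_k$ and argue that on it both $f_k$ and $f_{k-1}$ take values only inside $[f_{k-1}(a),f_{k-1}(b)]$. For $f_k$ this follows from assertion~(iv) of \cref{properties of fk} (monotonicity) together with assertion~(ii) (the equalities $f_k(a)=f_{k-1}(a)$, $f_k(b)=f_{k-1}(b)$); for $f_{k-1}$ it follows since, by assertion~(iii), $f_{k-1}$ is affine on $[a,b]$ and, by assertion~(ii), has the same (ordered) endpoint values there. Consequently $\abs{f_k(x)-f_{k-1}(x)}\leq f_{k-1}(b)-f_{k-1}(a)$ for all $x\in[a,b]$, and this length equals $w_{k+1}$ by assertion~(vii) of \cref{properties of fk} combined with assertion~(ii) of \cref{properties of Ik} (equivalently, directly by assertion~(i) of \cref{properties of fk}). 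Invoking \cref{wk} we thus obtain
\[
\norm{f_k-f_{k-1}}_\infty\leq w_{k+1}\leq\frac{1}{2^{k}}\qquad\text{for every }k\in\mathbb N.
\]

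Finally, for any $m>n$ the triangle inequality gives $\norm{f_m-f_n}_\infty\leq\sum_{k=n+1}^{m}\norm{f_k-f_{k-1}}_\infty\leq\sum_{k=n+1}^{\infty}2^{-k}=2^{-n}$, which tends to $0$ as $n\to\infty$; hence $(f_k)_{k\in\mathbb N}$ is a Cauchy sequence with respect to the supremum norm. I do not foresee a genuine obstacle here, as the whole argument is bookkeeping with the already proven assertions of \cref{properties of fk} and \cref{properties of Ik}. The one place to be slightly careful is the estimate on a single interval of $\mathcal{I}_k$: rather than comparing the two piecewise-affine functions $f_k$ and $f_{k-1}$ value by value, one should note that their graphs over $[a,b]$ are both trapped between the horizontal levels $f_{k-1}(a)$ and $f_{k-1}(b)$, which instantly yields the bound $w_{k+1}$.
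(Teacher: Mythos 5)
Your proof is correct, and while it follows the same overall strategy as the paper (bound $\norm{f_k-f_{k-1}}_\infty$ by a geometric quantity and telescope), the key single-interval estimate is obtained differently. The paper pins down the point where $\abs{f_k-f_{k-1}}$ is maximal on $[a,b]\in\mathcal{I}_k$, namely $\frac{a+b}{2}-\varepsilon_k$, and then computes the value there explicitly via \cref{value at}, arriving at $\abs{w_{k+2}-\frac{w_{k+1}}{w_k}(\frac{w_k}{2}-\varepsilon_k)}\leq w_{k+2}+\frac{1}{2}w_{k+1}\leq\frac{1}{2^k}$. You instead avoid locating the maximizer altogether: since $f_k$ is increasing with $f_k(a)=f_{k-1}(a)$ and $f_k(b)=f_{k-1}(b)$, and $f_{k-1}$ is affine on $[a,b]$, both graphs over $[a,b]$ lie in the box $[a,b]\times[f_{k-1}(a),f_{k-1}(b)]$, whose height is $w_{k+1}\leq 2^{-k}$ by assertion~(i) of \cref{properties of fk} (applied at level $k-1$) and \cref{wk}. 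This is a genuine simplification: it sidesteps the (slightly glossed-over) justification in the paper that the supremum is attained at $\frac{a+b}{2}-\varepsilon_k$, and it needs no arithmetic with the recursions \cref{width recursive} and \cref{wk+1}. The only cost is a marginally weaker intermediate bound ($w_{k+1}$ instead of $w_{k+2}+\frac{1}{2}w_{k+1}$), which is irrelevant since both are $\leq 2^{-k}$ and summability is all that is needed.
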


\begin{proof}
Fix $k\in\mathbb N$, interval $[a,b]\in\mathcal{I}_{k}$ and note that we only need to show that $\sup\{|f_{k}(x)-f_{k-1}(x)|:x\in[a,b]\}\leq\frac{1}{2^k}$. According to assertion~(ii) of \cref{properties of fk} the supremum is attained at the point $\frac{a+b}{2}-\varepsilon_k$ (see \cref{Graphs}). Therefore, it suffices to prove that
$M=|f_k(\frac{a+b}{2}-\varepsilon_k)-f_{k-1}(\frac{a+b}{2}-\varepsilon_k)|\leq\frac{1}{2^k}$.

Making use of \cref{value at}, assertions (iii) and~(i) of \cref{properties of fk}, assertion~(ii) of \cref{properties of Ik}, \cref{wk+1}, \cref{width recursive}, and \cref{decay of the width}  we get
\begin{equation*}
\begin{split}
M&=\left|w_{k+2}+f_{k-1}(a)-f_{k-1}\left(\frac{a+b}{2}-\varepsilon_k\right)\right|\\
&=\left|w_{k+2}+f_{k-1}(a)-\frac{f_{k-1}(b)-f_{k-1}(a)}{b-a}\left(\frac{a+b}{2}-\varepsilon_k-a\right)-f_{k-1}(a)\right|\\
&=\left|w_{k+2}-\frac{w_{k+1}}{w_k}\left(\frac{w_k}{2}-\varepsilon_k\right)\right|\leq w_{k+2}+\frac{1}{2}{w_{k+1}}\leq\frac{1}{2^{k}}.
\end{split}
\end{equation*}
The proof is complete.
\end{proof}

Define the function $f\colon[0,1]\to[0,1]$ by
\begin{equation*}
f(x)=\lim_{k\to\infty}f_k(x);
\end{equation*}
\cref{Cauchy} shows that $f$ is well-defined and continuous.

\begin{lemma}\label{strictly increasing}
The function $f$ is strictly increasing.
\end{lemma}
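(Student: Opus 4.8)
\emph{Plan.} The key fact I would exploit is that once the $k$-th modification carves the open middle gap out of an interval $[a,b]\in\mathcal{I}_k$, that gap is never touched by any later modification, so on it $f$ coincides with $f_k$, which is strictly increasing. Since each $f_k$ is strictly increasing (assertion~(iv) of \cref{properties of fk}), the pointwise limit $f$ is at least non-decreasing. It therefore suffices to fix $x<y$ in $[0,1]$ and exhibit a nonempty open subinterval of $(x,y)$ on which $f$ agrees with some $f_k$; monotonicity of $f$ will then carry the strict increase on that subinterval over to the pair $x,y$.

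First I would isolate this ``frozen'' part of $f$. For $[a,b]\in\mathcal{I}_j$ put $G_{[a,b]}=(\tfrac{a+b}{2}-\varepsilon_j,\tfrac{a+b}{2}+\varepsilon_j)$, the open gap deleted from $[a,b]$ at the $j$-th step. By the construction of $\mathcal{I}_{j+1}$ together with the disjointness of the members of $\mathcal{I}_j$ (assertion~(i) of \cref{properties of Ik}), the set $G_{[a,b]}$ is disjoint from $\bigcup\mathcal{I}_{j+1}$, hence, by assertion~(iii) of \cref{properties of Ik}, from $\bigcup\mathcal{I}_{j'}$ for every $j'\geq j+1$. Since $f_{j'}$ coincides with $f_{j'-1}$ off $\bigcup\mathcal{I}_{j'}$, this gives $f_{j'}=f_j$ on $G_{[a,b]}$ for every $j'\geq j$, and so $f=f_j$ on $G_{[a,b]}$. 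Next, telescoping the chain $[0,1]=\bigcup\mathcal{I}_1\supsetneq\bigcup\mathcal{I}_2\supsetneq\cdots\supsetneq\bigcup\mathcal{I}_m$ and reading $\bigcup\mathcal{I}_j\setminus\bigcup\mathcal{I}_{j+1}$ off the construction yields
\[
[0,1]\setminus A_m=\bigcup_{j=1}^{m-1}\;\bigcup_{[a,b]\in\mathcal{I}_j}G_{[a,b]}\qquad\text{for every }m\geq 2,
\]
so every point lying outside some $A_m$ is contained in one of these frozen gaps.

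To finish, I would argue as follows. By \cref{nowhere dense} the set $A_*$ is nowhere dense, so $(x,y)\not\subseteq A_*$; pick $z\in(x,y)\setminus A_*$. Since $A_*=\bigcap_k A_k$ and $A_1=[0,1]$, there is $m\geq 2$ with $z\notin A_m$, and by the displayed identity $z\in G_{[a,b]}$ for some $j\leq m-1$ and some $[a,b]\in\mathcal{I}_j$. Then $(x,y)\cap G_{[a,b]}$ is a nonempty open interval on which $f=f_j$, and $f_j$, being strictly increasing, is non-constant there; hence there exist $p,q\in(x,y)\cap G_{[a,b]}$ with $f(p)<f(q)$. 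Because $f$ is non-decreasing and $x<p$, $q<y$, we obtain $f(x)\leq f(p)<f(q)\leq f(y)$, that is, $f(x)<f(y)$. The only genuine work lies in the middle paragraph — checking that later steps leave the middle gaps untouched and that the complement of $A_m$ is exactly the union of these gaps — and I expect this (purely bookkeeping) point to be the main obstacle; everything else is immediate from the facts already established.
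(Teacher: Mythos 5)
Your proof is correct and follows essentially the same route as the paper's: both reduce to the fact that $f$ is non-decreasing as a limit of the strictly increasing $f_k$, use nowhere density of $A_*$ to locate, inside any given interval, an open set disjoint from some $A_k$ on which $f$ coincides with the strictly increasing $f_k$. The paper phrases this as a proof by contradiction and passes from $A_*$ to $A_k$ by compactness, whereas you argue directly and track the ``frozen'' gaps explicitly, but the underlying idea is identical.
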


\begin{proof}
The function $f$ is increasing by assertion~(iv) of \cref{properties of fk}.

Suppose the assertion of the lemma is false. Then there exists an interval $[x,y]\subset[0,1]$ on which $f$ is constant.

By \cref{nowhere dense}, the set $A_*$ is closed and nowhere dense. Hence, we  find a point $z\in (x,y)$ and $r>0$ such that $[z-r,z+r]\subset(x,y)$ and $[z-r,z+r]\cap A_*=\emptyset$. Since  the sequence $(A_k)_{k}$ is descending, we see that  there exists $k\in \field{N}$ such that $[z-r,z+r]\cap A_k=\emptyset$. This implies that $f$ equals $f_k$ on~$[z-r,z+r]$. Therefore, $f_k$ is constant on~$[z-r,z+r]$, which contradicts that $f_k$ is strictly increasing as pointed out in assertion~(iv) of \cref{properties of fk}.
\end{proof}

\begin{lemma}\label{contraction}
The function $f$ is a contraction.
\end{lemma}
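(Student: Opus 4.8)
The plan is to pass the uniform Lipschitz bound of the functions $f_k$ to the limit. By assertion~(v) of \cref{properties of fk}, for every $k\in\mathbb N$ the function $f_k$ is $L$-Lipschitz with some constant strictly smaller than $\frac12$; in particular every $f_k$ satisfies
\[
\abs{f_k(x)-f_k(y)}\leq\tfrac12\abs{x-y}\quad\text{for all }x,y\in[0,1],
\]
with the \emph{same} constant $\frac12$, independent of $k$.

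Next I would fix arbitrary $x,y\in[0,1]$ and let $k\to\infty$ in the displayed inequality. By \cref{Cauchy} the sequence $(f_k)_{k\in\mathbb N}$ converges pointwise to $f$, so $f_k(x)\to f(x)$ and $f_k(y)\to f(y)$, and hence $\abs{f_k(x)-f_k(y)}\to\abs{f(x)-f(y)}$. Passing to the limit preserves the (non-strict) inequality, which yields $\abs{f(x)-f(y)}\leq\frac12\abs{x-y}$. Since $x$ and $y$ were arbitrary, $f$ is $\frac12$-Lipschitz, and as $\frac12<1$ this means precisely that $f$ is a contraction.

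There is essentially no obstacle here: the only point worth noting is that, although the optimal Lipschitz constants of the $f_k$ could a priori depend on $k$, they are all dominated by the single constant $\frac12<1$, and it is this uniform domination — rather than any individual constant — that is used when carrying the estimate through the pointwise limit. One could equally well invoke the standard fact that a pointwise (or uniform) limit of functions sharing a common Lipschitz constant $L$ is itself $L$-Lipschitz, applied here with $L=\frac12$.
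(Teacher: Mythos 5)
Your proof is correct and is essentially the same as the paper's: both pass the uniform Lipschitz bound $\frac12$ from assertion~(v) of \cref{properties of fk} through the pointwise limit furnished by \cref{Cauchy}. Your version is in fact slightly more carefully worded, since it makes explicit that the single constant $\frac12$ dominates all the (possibly $k$-dependent) optimal Lipschitz constants.
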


\begin{proof}
Note that, by assertion~(v) of \cref{properties of fk}, we have 
\begin{equation*}
  \abs{f(y)-f(x)}=\lim_{k\to \infty} \abs{f_k(y)-f_k(y)}\leq\frac{1}{2}\abs{x-y}
\end{equation*} 
for all $x,y\in[0,1]$.
\end{proof}

We finish this subsection with proving a property of $f$, which will be used later.

\begin{lemma}\label{vaule at endpoints}
If $x$ is an endpoint of an interval belonging to $\mathcal{I}_k$ for some $k\in\mathbb N$, then $f(x)=f_{k-1}(x)$.
\end{lemma}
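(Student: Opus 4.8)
The plan is to reduce everything to assertion~(ii) of \cref{properties of fk} together with the observation that the endpoints of the intervals in $\mathcal{I}_k$ survive in all later generations. Concretely, I would first prove the auxiliary claim: if $x$ is an endpoint of an interval belonging to $\mathcal{I}_k$, then for every $m\geq k$ the point $x$ is also an endpoint of an interval belonging to $\mathcal{I}_m$. This is immediate by induction on $m$. The base case $m=k$ is the hypothesis. For the inductive step, suppose $x$ is an endpoint of some $[a,b]\in\mathcal{I}_m$; by the recursive definition of $\mathcal{I}_{m+1}$ the intervals $[a,\frac{a+b}{2}-\varepsilon_m]$ and $[\frac{a+b}{2}+\varepsilon_m,b]$ both lie in $\mathcal{I}_{m+1}$, and $x$ (being $a$ or $b$) is an endpoint of one of them.

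Next I would invoke assertion~(ii) of \cref{properties of fk}: since, for each $m\geq k$, the point $x$ is an endpoint of some interval in $\mathcal{I}_m$, we get $f_m(x)=f_{m-1}(x)$. Iterating this identity downward from $m$ to $k$ yields $f_m(x)=f_{k-1}(x)$ for every $m\geq k$.

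Finally, I would let $m\to\infty$: by \cref{Cauchy} the sequence $(f_m)_{m\in\mathbb N}$ converges (uniformly, hence pointwise) to $f$, so $f(x)=\lim_{m\to\infty}f_m(x)=f_{k-1}(x)$, which is the assertion. There is essentially no obstacle here; the only point requiring a moment's care is the persistence of endpoints through the generations $\mathcal{I}_m$, and this is a direct consequence of the recursive definition of the collections $\mathcal{I}_m$.
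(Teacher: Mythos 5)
Your proof is correct and follows essentially the same route as the paper: persistence of endpoints through all later generations $\mathcal{I}_m$ (which the paper dismisses as ``a trivial verification'' and you spell out by induction), then assertion~(ii) of \cref{properties of fk} to get $f_m(x)=f_{k-1}(x)$ for all $m\geq k$, and finally passing to the limit.
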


\begin{proof}
Fix $k\in\mathbb N$ and an interval $[a,b]\in\mathcal{I}_k$. A trivial verification shows that for every $l\geq k$ the point $a$ is always a left endpoint of an interval from $\mathcal{I}_{l}$ and the point $b$ is always a right endpoint of an interval from $\mathcal{I}_{l}$. This jointly with assertion~(ii) of \cref{properties of fk} implies $f_{l}(a)=f_{k-1}(a)$ and $f_{l}(b)=f_{k-1}(b)$ for every $l\geq k$, and hence $f(a)=f_{k-1}(a)$ and $f(b)=f_{k-1}(b)$.
\end{proof}


\subsection{Definition of the IFS}
We define the announced IFS by taking $f$ and $g=f+\frac{2}{3}$.

By assertion~(vi) of \cref{properties of fk} we have
\begin{equation*}
f([0,1])=\left[0,\frac{1}{3}\right]\quad\hbox{and}\quad  g([0,1])=\left[\frac{2}{3},1\right].
\end{equation*}
Moreover, from \cref{strictly increasing} and \cref{contraction} we see that our IFS consists of strictly increasing contractions.

If we show that its attractor is the set $A_*$, then the example will be complete.

\begin{lemma}
The set $A_*$ is the attractor of the IFS consisting of $f$ and $g$.
\end{lemma}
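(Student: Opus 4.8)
The plan is to apply the ``moreover'' part of \cref{UniqueAttractor} to the IFS $\{f,g\}$. Put $B_1=[0,1]$ and $B_{k+1}=f(B_k)\cup g(B_k)$ for $k\in\mathbb N$; then the attractor of $\{f,g\}$ equals $\bigcap_{k\in\mathbb N}B_k$, exactly as in \cref{attractor}. So it is enough to show by induction that $B_k=A_k$ for every $k\in\mathbb N$: then the attractor is $\bigcap_{k\in\mathbb N}B_k=\bigcap_{k\in\mathbb N}A_k=A_*$, and $A_*$ is indeed a non-empty compact set by \cref{nowhere dense}.

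The base step is trivial, $B_1=[0,1]=\bigcup\mathcal I_1=A_1$. For the inductive step, assume $B_k=A_k$. The key claim is
\begin{equation*}
f(A_k)=A_{k+1}\cap\left[0,\tfrac13\right]\qquad\text{and}\qquad g(A_k)=A_{k+1}\cap\left[\tfrac23,1\right].
\end{equation*}
For the first identity, recall that $f$ is continuous by \cref{Cauchy} and strictly increasing by \cref{strictly increasing}, so it maps each $[a,b]\in\mathcal I_k$ onto $[f(a),f(b)]$, which equals $[f_{k-1}(a),f_{k-1}(b)]$ by \cref{vaule at endpoints}; hence $f(A_k)=\bigcup_{[a,b]\in\mathcal I_k}[f_{k-1}(a),f_{k-1}(b)]$. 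By assertion~(vii) of \cref{properties of fk} each of these intervals belongs to $\mathcal I_{k+1}$, and by assertion~(vi) it lies in $\left[0,\tfrac13\right]$; conversely, by assertion~(viii) of \cref{properties of fk}, assertion~(vi) and the disjointness of $\left[0,\tfrac13\right]$ and $\left[\tfrac23,1\right]$, every interval of $\mathcal I_{k+1}$ contained in $\left[0,\tfrac13\right]$ is of this form. Since every interval of $\mathcal I_{k+1}$ is, by connectedness, contained in $\left[0,\tfrac13\right]$ or in $\left[\tfrac23,1\right]$ (because $A_{k+1}\subseteq\left[0,\tfrac13\right]\cup\left[\tfrac23,1\right]$ by \eqref{asscending} and \cref{properties of Ik}), it follows that $f(A_k)$ is precisely the union of those intervals of $\mathcal I_{k+1}$ lying in $\left[0,\tfrac13\right]$, i.e.\ $f(A_k)=A_{k+1}\cap\left[0,\tfrac13\right]$. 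The second identity follows using $g=f+\tfrac23$ and assertion~(iv) of \cref{properties of Ik}, by which translation by $\tfrac23$ is a bijection from the intervals of $\mathcal I_{k+1}$ contained in $\left[0,\tfrac13\right]$ onto those contained in $\left[\tfrac23,1\right]$; hence $g(A_k)=f(A_k)+\tfrac23=A_{k+1}\cap\left[\tfrac23,1\right]$.

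Combining the two identities with the induction hypothesis,
\begin{equation*}
B_{k+1}=f(B_k)\cup g(B_k)=f(A_k)\cup g(A_k)=\left(A_{k+1}\cap\left[0,\tfrac13\right]\right)\cup\left(A_{k+1}\cap\left[\tfrac23,1\right]\right)=A_{k+1},
\end{equation*}
the last equality because $A_{k+1}\subseteq\left[0,\tfrac13\right]\cup\left[\tfrac23,1\right]$. This completes the induction, and therefore $A_*=\bigcap_{k\in\mathbb N}A_k=\bigcap_{k\in\mathbb N}B_k$ is the attractor of the IFS consisting of $f$ and $g$.

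The one genuinely delicate step is the identity $f(A_k)=A_{k+1}\cap\left[0,\tfrac13\right]$; everything rests on \cref{vaule at endpoints}, which lets $f$ act on the endpoints of the $k$-th generation intervals like $f_{k-1}$, together with assertions~(vi)--(viii) of \cref{properties of fk}, which say that these endpoints land exactly on the endpoints of the ``left half'' of $\mathcal I_{k+1}$. The remainder is bookkeeping with the symmetry $g=f+\tfrac23$ and the disjointness of $\left[0,\tfrac13\right]$ and $\left[\tfrac23,1\right]$.
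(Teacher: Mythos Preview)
Your proof is correct and follows essentially the same approach as the paper: both establish the key recursion $A_{k+1}=f(A_k)\cup g(A_k)$ from \cref{vaule at endpoints} together with assertions~(vi)--(viii) of \cref{properties of fk} and assertion~(iv) of \cref{properties of Ik}. The only cosmetic difference is the final step: you invoke the ``moreover'' part of \cref{UniqueAttractor} to conclude that the attractor is $\bigcap_k A_k$, whereas the paper passes to the limit directly, using that $f$ and $g$ are injective to commute the intersection with the images and verify $A_*=f(A_*)\cup g(A_*)$.
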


\begin{proof}
We first prove that
\begin{equation}\label{Ak+1}
A_{k+1}=f(A_k)\cup g(A_k)
\end{equation}
for every $k\in\mathbb N$.

Fix $k\in\mathbb N$. From assertion~(viii) of \cref{properties of fk} we conclude that $A_{k+1}\subset f(A_k)\cup g(A_k)$.
If we prove that $f(A_k)\cup g(A_k)\subset A_{k+1}$, the assertion follows.

Fix $x\in A_k$ and choose an interval $[a,b]\in\mathcal{I}_k$ such that $x\in[a,b]$. From \cref{vaule at endpoints} we get $f(a)=f_{k-1}(a)$ and $f(b)=f_{k-1}(b)$. Then using \cref{strictly increasing} and assertion~(vii) of \cref{properties of fk} we obtain
\begin{equation*}
f(x)\in[f(a),f(b)]=[f_{k-1}(a),f_{k-1}(b)]\subset\bigcup\mathcal{I}_{k+1}=A_{k+1}.
\end{equation*}
Making also use of assertion~(vi) of \cref{properties of fk} and assertion~(iv) of \cref{properties of Ik} we get
\begin{equation*}
g(x)=f(x)+\frac{2}{3}\in[f(a),f(b)]+\frac{2}{3}
\subset\bigcup\mathcal{I}_{k+1}=A_{k+1},
\end{equation*}
which proves \cref{Ak+1}.

In \cref{nowhere dense} we have recorded already that $A_*$ is nonempty and compact. According to \cref{UniqueAttractor} it remains to prove that $A_*=f(A_*)\cup g(A_*)$.

As $f$ and $g$ are strictly increasing as verified in the proof of~\cref{strictly increasing}, we have, using \cref{asscending},
\begin{equation*}
f(A_*)\cup g(A_*)=\bigcap_{k\in\field{N}}f\big(A_k\big)\cup \bigcap_{k\in \field{N}}g\big(A_k)=\bigcap_{k\in\field{N}}\big(f(A_k)\cup g(A_k)\big)
=\bigcap_{k\in\field{N}}A_{k+1}=A_*.
\end{equation*}
The proof is complete.
\end{proof}


\section*{Acknowledgment}
The research of was supported by the Silesian University Mathematics Department (Iterative Functional Equations and Real Analysis program).
Furthermore, the research leading to these results has received funding from the European Research~Council under the European Union's Seventh Framework Programme (\ignoreSpellCheck{FP}/2007-2013) / \ignoreSpellCheck{ERC} Grant Agreement n.291497 while the second author was a research fellow at the University of Warwick.

\bibliographystyle{plain}
\bibliography{PositiveMeasure}

\end{document}